\def\Correspondingauthor{$^{*}$\protect\footnotetext{$^{*}$ C\lowercase{orresponding author.}}}
\def\authorsaddresses#1{\dedicatory{#1}}
\newtheorem{theorem}{Theorem}[section]
\newtheorem{lemma}[theorem]{Lemma}
\newtheorem{proposition}[theorem]{Proposition}
\theoremstyle{definition}
\newtheorem{definition}[theorem]{Definition}
\theoremstyle{remark}
\numberwithin{equation}{section}
\begin{document}
\setcounter{page}{1}


\title[Invertibility of Generalized Bessel multipliers]{Invertibility of Generalized Bessel multipliers in Hilbert $C^{*}$-modules}

\author[Abbaspour Tabadkan, Hossein-nezhad]{Gholamreza Abbaspour Tabadkan$^1$\Correspondingauthor, Hessam Hossein-nezhad$^1$}

\authorsaddresses{$^1$ Department of pure mathematics, school of mathematics and computer science,
Damghan university, Damghan, Iran.\\
abbaspour@du.ac.ir; h.hosseinnezhad@std.du.ac.ir}

\subjclass[2010]{Primary 42C15 Secondary  46C05, 47A05}

\keywords{Bessel multiplier; Modular Riesz basis; Standard frame.}

\begin{abstract}
In this note, a general version of Bessel multipliers in Hilbert $C^*$-modules is presented and then, many
results obtained for multipliers are extended. Also the conditions for
invertibility of generalized multipliers are investigated in details.
The invertibility of multipliers is very important because it helps us
to obtain more reconstruction formula.

\end{abstract}

\maketitle

\section{Introduction}
Frames in Hilbert space were originally introduced by Duffin and Schaeffer \cite{R.J. Duffin and A.C. Schaeffer-1952} to deal with some problems in nonharmonic Fourier analysis.
Many generalizations of frames were introduced, e.g. pseudo-frames,
oblique frames, G-frames, and fusion frames(frames of subspaces).

Frank and Larson \cite{Frank-Larson-2002} extended the frame
theory for the elements of $C^{*}$-algebras and (finitely or countably generated) Hilbert $C^{*}$-modules.
Extending the results to this
more general framework is not a routine generalization, as there are essential differences
between Hilbert $C^{*}$-modules and Hilbert spaces.
For example, we know that the Riesz representation theorem for continuous linear functionals on Hilbert spaces
dose not extend to Hilbert $C^{*}$-modules and there exist closed subspaces
in Hilbert $C^{*}$-modules that have no orthogonal complement. Moreover,
we know that every bounded operator on a Hilbert space has an adjoint, while there are bounded operators on Hilbert $C^{*}$-modules which
do not have any.

Bessel multipliers in Hilbert spaces were introduced by Balazs in \cite{Balazs-2007}. Bessel
multipliers are operators that are defined by a fixed multiplication pattern which
is inserted between the analysis and synthesis operators. This class of operators is not only of interest for applications in modern life,
for example in
acoustics, psychoacoustics and denoising,
but also it is important in different branches of functional analysis.
Recently, M. Mirzaee Azandaryani and A. Khosravi generalized multipliers to Hilbert $C^{*}$-modules \cite{Khosravi-Mirzaee Azandaryani}.

The standard matrix description of operators on Hilbert spaces, using an orthonormal basis,
was presented in \cite{J.B. Conway}. This idea was developed for Bessel sequences, frames and Riesz sequences by Balazs \cite{Balazs-2008}.
In the last paper, the author also studied the dual function, which assigns an operator to a matrix.
Using this approach, a generalization of Bessel multipliers is obtained, as introduced in \cite{Balazs-2009}.
In \cite{abbaspour}, the authors investigated some properties of generalized multipliers in details.
In this paper, we are going to extend this concept to Hilbert modules.

The paper is organized as follows.

In section $2$, some notations and preliminary results of Hilbert modules, their frames and Bessel multipliers are given.
Section $3$ is devoted to the generalization of Bessel multipliers in Hilbert $C^{*}$-modules and then some conditions for invertibility of
such operators are obtained.
In the last section, we consider generalized modular Riesz multipliers and extend some known results.
Moreover, we add some new consequences of them.

\section{Notation and preliminaries}
In this section, we recall some definitions and basic properties of Hilbert
$C^{*}$-modules and their frames.
Throughout this paper, $A$ is a unital $C^{*}$-algebra and $E$, $F$ are finitely or countably generated Hilbert $A$-modules.

\label{sec:intro}
A (left) Hilbert $C^{*}$-module over the $C^{*}$-algebra A
is a left $A$-module $E$ equipped with an $A$-valued inner product
$\langle\cdot, \cdot\rangle : E\times E\rightarrow A$ satisfying the following
conditions:
\begin{enumerate}
\item
$\langle x,x\rangle\geq 0$ for every $x\in E$ and $\langle x,x\rangle =0$ iff $x=0$,
\item
$\langle x,y\rangle =\langle y,x\rangle^*$ for every $x,y\in E$,
\item
$\langle\cdot,\cdot\rangle$ is $A$-linear in the first argument,
\item
$E$ is complete with respect to the norm $\| x\|^2=\|\langle x,x\rangle\|_A$.
\end{enumerate}
Given Hilbert $C^{*}$-modules $E$ and $F$, we denote by $L(E, F)$ the set of all adjointable operators from $E$ to $F$
(i.e. of all maps $T: E\rightarrow F$ such that there exists $T^*: F\rightarrow E $ with the property
$\langle Tx, y\rangle = \langle x, T^*y\rangle$ for all $x\in E, y\in F$).
It is well-known that each adjointable operator is necessarily bounded and $A$-linear in the sense $T(ax) = aT(x)$, for all $a\in A, x\in E$.

For each elements $x\in E, y\in F$, we define the operator
$\Theta_{x,y}:E\rightarrow F$
by $\Theta_{x,y}(z)=\langle z, x\rangle y$, for each $z\in E$.
It is easy to check that
$\Theta_{x,y}\in L(E, F)$ and $(\Theta_{x,y})^* = \Theta_{y,x}$. Operators of this form are called elementary operators.
Each finite linear combination of elementary operators is
said to be a finite rank operator. The closed linear span of the set $\{\Theta_{x,y}:~x\in F,~y\in E\}$ in $L(E, F)$ is
denoted by $K(E,F)$ and its elements will be called compact operators.
Specially, if $E=F$, we write $L(E)$ and $K(E)$, respectively. It is well-known that $L(E)$ is a $C^{*}$-algebra
and $K(E)$ is the closed two-sided ideal in $L(E)$.
Recall that the center of a Banach algebra $A$, denoted $Z(A)$, is defined as
$Z(A)=\{a\in A; ab = ba, \forall b\in A\}$.
It is clear that if $a\in Z(A)$, then $a^*\in Z(A)$, also if $a$ is a positive element of $Z(A)$, then $a^{\frac{1}{2}}\in Z(A)$.

Let $A$ be a $C^{*}$-algebra. Consider
$$\ell^2(A) := \{\{a_n\}_n\subseteq A:~\sum_{n}a_n a^{*}_{n}\text{converges in norm in}~A\}.$$
It is easy too see that $\ell^2(A)$ with pointwise operations and the inner product
$$\langle\{a_n\}, \{b_n\}\rangle=\sum_{n}a_n b^{*}_{n},$$
becomes a Hilbert $C^{*}$-module which is called the standard Hilbert $C^{*}$-module over $A$.
A Hilbert $A$-module $E$ is called finitely generated (resp. countably generated) if there exist a finite subset $\{x_1, ..., x_n\}$ (resp. countable set $\{x_n\}_{n}$)
of $E$ such that $E$ equals the closed $A$-linear hull of this set.
For more details about Hilbert $C^{*}$-modules, we refer the interested
reader to the books \cite{Lance-1995, Manuilov-Troitsky}.

Now, we recall the concept of frame in Hilbert $C^{*}$-modules which is defined in \cite{Frank-Larson-2002}.
Let $E$ be a countably generated Hilbert module over a unital $C^{*}$-algebra $A$. A sequence $\{x_n\}\subset E$ is said
to be a \emph{frame} if there exist two constant $C, D>0$ such that
\begin{equation}\label{frame intro}
C\langle x, x\rangle\leq\sum_{n}\langle x, x_n\rangle\langle x_n, x\rangle\leq D\langle x, x\rangle
\end{equation}
for every $x\in E$.
The optimal constants (i.e. maximal for $C$ and minimal for $D$) are called frame bounds. If the sum in \eqref{frame intro} converges in norm, the
frame is called \emph{standard frame}.
The sequence $\{x_n\}$ is called a \emph{Bessel sequence} with bound $D$ if the upper inequality in \eqref{frame intro} holds for every $x\in E$.

Suppose that $\{x_n\}$ is a standard frame of a Hilbert
$A$-module $E$ with bounds $C$ and $D$. The operator $T:E\rightarrow\ell^2(A)$ defined by
\begin{equation*}
Tx = \{\langle x, x_n\rangle\}_n,
\end{equation*}
is called the analysis operator. The adjoint operator $T^{*}:\ell^2(A)\rightarrow E$ is given by
\begin{equation*}
T^{*}(\{a_n\}) = \sum_{n}a_n~.~x_n.
\end{equation*}
$T^{*}$ is called the synthesis operator. By composing $T$ and $T^{*}$, we obtain the frame operator $S:E\rightarrow E$ as:
\begin{equation*}
Sx = T^{*}Tx = \sum_{n}\langle x, x_n\rangle x_n.
\end{equation*}
The operator $S$ is well-defined, positive, invertible and adjointable; moreover it satisfies $C\leq S\leq D$ and
$D^{-1}\leq S^{-1}\leq C^{-1}$. Also for each $x\in E$, we have \emph{the reconstruction formula} as follows:
\begin{equation}\label{recons formula}
x = \sum_{n}\langle x, S^{-1}x_n\rangle x_n = \sum_{n}\langle x, x_n\rangle S^{-1}x_n.
\end{equation}
The sequence $\{\tilde{x}_n\} = \{S^{-1}x_n\}$, which is a standard frame with bounds $D^{-1}$ and $C^{-1}$, is called the \emph{canonical dual frame} of $\{x_n\}$.
Sometimes the reconstruction formula of standard frames is valid
with other (standard) frames $\{y_n\}$ instead of $\{S^{-1}x_n\}$. They are said to be \emph{alternative dual frames} of $\{x_n\}$.

Now let us take a brief review of the definition of Bessel multipliers in Hilbert $C^{*}$-modules.

Let $E$ and $F$ be two Hilbert modules over a unital $C^{*}$-algebra $A$, and let $\{x_n\}\subseteq E$ and $\{y_n\}\subseteq F$ be
standard Bessel sequences. Moreover let $m=\{m_n\}\in \ell^{\infty}(A)$ be such that $m_n\in Z(A)$, for each $n$, and $\mathcal{M}_{m}$
defined on $\ell^2(A)$ as $\mathcal{M}_{m}(\{a_n\}) = \{m_n a_n\}$.

The operator $\mathbf{M}_{m,\{y_n\},\{x_n\}}:E\rightarrow F$ which is defined by
\begin{equation}
\mathbf{M}_{m,\{y_n\},\{x_n\}} = T^{*}_{\{y_n\}}\mathcal{M}_{m}T_{\{x_n\}},
\end{equation}
is called the \emph{Bessel multiplier} for the Bessel sequences $\{x_n\}$ and $\{y_n\}$. It is easy to see that
$\mathbf{M}_{m,\{y_n\},\{x_n\}}(x) = \sum_{n}m_n\langle x, x_n\rangle y_n$.
For more details about the Bessel multipliers in Hilbert $C^{*}$-modules, one can see \cite{Khosravi-Mirzaee Azandaryani}.
\section{Generalized Bessel multipliers in Hilbert $C^{*}$-modules}
The matrix representation of operators in Hilbert spaces using an orthonormal basis \cite{J.B. Conway}, Gabor
frames \cite{Grochenig-2006} and linear independent Gabor systems \cite{Strohmer-2006} led Balazs to develop 
this idea in full generality for Bessel sequences, frames and
Riesz sequences \cite{Balazs-2008}.
In the same paper, the author also established
the function which assigns an operator in $\mathcal{B}(\mathcal{H}_1, \mathcal{H}_2)$ to an infinite matrix in $\mathcal{B}(\ell^2)$.
The last concept is a generalization of Bessel multiplier as introduced in \cite{Balazs-2008}. The following essential definition is recalled from \cite{Balazs-2008, Balazs-2009}.
\begin{definition}
Let $\mathcal{H}_1$ and $\mathcal{H}_2$ be Hilbert spaces and $X=\{x_n\}\subset\mathcal{H}_1$ and $Y=\{y_n\}\subset\mathcal{H}_2$ be
Bessel sequences. Moreover let $M$ be an infinite matrix defining a bounded operator from $\ell^2$ to $\ell^2$, $(Mc)_{i} = \sum_{k}M_{i,k}c_k$.
Then the operator $\mathcal{O}^{(X,Y)}(M):\mathcal{H}_1\rightarrow\mathcal{H}_2$ defined by
\begin{equation*}
(\mathcal{O}^{(X,Y)}(M))h = T^{*}_{Y}MT_{X} (h) = \sum_k \sum_j M_{k,j} \langle h, x_j\rangle y_k,~~~(h\in\mathcal{H}_1),
\end{equation*}
is called \emph{the generalized Bessel multiplier} for the Bessel sequences $X$ and $Y$.
\end{definition}
In the sequel, first we introduce the concept of Generalized Bessel multipliers for countably generated Hilbert $C^{*}$-modules
and then, we will discuss some properties of such operators.
\begin{definition}
Let $E$ and $F$ be two Hilbert $C^{*}$-modules over a unital $C^{*}$-algebra $A$ and $X=\{x_n\}\subset E$ and $Y=\{y_n\}\subset F$ be
standard Bessel sequences. Also let $U\in L(\ell^2(A))$ be an arbitrary non-zero operator.
The operator $\mathbf{M}_{U,Y,X}:E\rightarrow F$ which is defined as
\begin{equation}\label{generalized Bessel multiplier}
\mathbf{M}_{U,Y,X}(x) = T^{*}_{Y}UT_{X}(x)~~~~(x\in E),
\end{equation}
is called the \emph{Generalized Bessel multiplier} associated to $X$ and $Y$ with symbol $U$.
Some of the main properties of the generalized Bessel multipliers are summarized in the next proposition.
\end{definition}
\begin{proposition}\label{properties of multi}
For the generalized Bessel multipliers $\mathbf{M}_{U,Y,X}$, the following assertions hold:
\begin{enumerate}
  \item $\mathbf{M}_{U,Y,X}\in L(E, F)$ and $\mathbf{M}^{*}_{U,Y,X} = \mathbf{M}_{U^{*},X,Y}$.
  \item If $U$ is a compact operator on $\ell^2(A)$, then $\mathbf{M}_{U,Y,X}\in K(E, F)$.
  \item If $U$ is a positive operator on $\ell^2(A)$, then $\mathbf{M}_{U,X,X}\in L(E)$ is a positive operator.
\end{enumerate}
\end{proposition}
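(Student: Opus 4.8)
The plan is to establish each of the three assertions by reducing them to the corresponding properties of the symbol $U$ together with the standing assumptions on $X$ and $Y$. For part (1), the key observation is that for standard Bessel sequences the analysis operators $T_X \in L(E,\ell^2(A))$ and $T_Y \in L(F,\ell^2(A))$ are themselves adjointable, with the synthesis operators as their adjoints. Since $U\in L(\ell^2(A))$ is adjointable by hypothesis, the composition $\mathbf{M}_{U,Y,X} = T^{*}_{Y}U T_{X}$ is a composition of three adjointable operators, hence adjointable, so $\mathbf{M}_{U,Y,X}\in L(E,F)$. The adjoint then follows from the reversal rule $(ABC)^{*} = C^{*}B^{*}A^{*}$:
\begin{equation*}
\mathbf{M}^{*}_{U,Y,X} = (T^{*}_{Y}U T_{X})^{*} = T^{*}_{X}U^{*}T_{Y} = \mathbf{M}_{U^{*},X,Y},
\end{equation*}
where in the last equality I recognize the middle expression as exactly the definition of the generalized multiplier with symbol $U^{*}$ and the roles of $X$ and $Y$ interchanged.

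For part (2), I would use that the compact operators $K(E,F)$ form, in the appropriate sense, an ideal-like class stable under composition with adjointable operators on either side. Concretely, if $U\in K(\ell^2(A))$, then since $T_X$ and $T^{*}_Y$ are adjointable, the products $U T_X$ and $T^{*}_Y (U T_X)$ remain compact; the cleanest route is to approximate $U$ in norm by finite-rank operators $U_k$ (finite $A$-linear combinations of elementary operators $\Theta_{a,b}$ on $\ell^2(A)$), observe that each $T^{*}_Y \Theta_{a,b} T_X$ is again elementary, namely $\Theta_{T_X^{*}a,\, T_Y^{*}b}$ up to the correct placement of adjoints, and then pass to the norm limit using $\|T^{*}_Y(U-U_k)T_X\|\le \|T^{*}_Y\|\,\|U-U_k\|\,\|T_X\|\to 0$. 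This shows $\mathbf{M}_{U,Y,X}$ lies in the closed span $K(E,F)$.

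For part (3), taking $X=Y$ and $U\ge 0$, the multiplier becomes $\mathbf{M}_{U,X,X} = T^{*}_{X}U T_{X}$, which has the canonical shape $S^{*}US$ with $S=T_X$. Since $U$ is positive it factors as $U = V^{*}V$ for some $V\in L(\ell^2(A))$, giving $T^{*}_X U T_X = (V T_X)^{*}(V T_X)\ge 0$; alternatively one checks directly that $\langle T^{*}_X U T_X x, x\rangle = \langle U T_X x, T_X x\rangle \ge 0$ for every $x\in E$, which is the defining positivity condition for an adjointable operator on a Hilbert $C^{*}$-module. Either computation immediately yields positivity.

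The main obstacle is not any single deep step but rather verifying carefully that the analysis operators of standard Bessel sequences are genuinely adjointable with the stated synthesis-operator adjoints; this is the hypothesis that makes the whole proposition go through and must be invoked cleanly, since in the Hilbert $C^{*}$-module setting boundedness alone does not guarantee adjointability. Once adjointability of $T_X$ and $T_Y$ is in hand, parts (1)–(3) are formal consequences of the $C^{*}$-algebraic calculus of adjointable operators, and the most delicate bookkeeping is tracking the placement of adjoints and the interchange of $X$ and $Y$ in part (1).
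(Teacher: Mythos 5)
Your proof is correct and takes essentially the same route as the paper: adjointability plus the reversal rule $(T^{*}_{Y}UT_{X})^{*}=T^{*}_{X}U^{*}T_{Y}$ for part (1), finite-rank approximation using the identity $T^{*}_{Y}\Theta_{a,b}T_{X}=\Theta_{T^{*}_{X}a,\,T^{*}_{Y}b}$ for part (2), and the positivity criterion $\langle T_{X}x, UT_{X}x\rangle\geq 0$ for part (3), where the paper makes explicit the citation (Lance, Lemma 4.1) justifying that this inner-product condition characterizes positivity in $L(E)$. Your alternative factorization $U=V^{*}V$ in part (3) is a minor, equally valid variant of the same argument.
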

\begin{proof}
(1) It is clear that $\mathbf{M}_{U,Y,X}\in L(E, F)$. Also
$$\mathbf{M}^{*}_{U,Y,X} = (T^{*}_{Y}UT_{X})^{*} = T^{*}_{X}U^{*}T_{Y} = \mathbf{M}_{U^{*},X,Y}.$$\\
(2) At the first, let us prove that $\mathbf{M}_{U,Y,X}$ is a finite rank operator if $U$ is one. If $U$ is a finite rank operator, then
$U = \sum_{j=1}^{n} \Theta_{a_j, b_j}$, for some $a_j, b_j\in \ell^2(A),(j=1, ..., n)$. Hence,
$$\mathbf{M}_{U,Y,X} = T^{*}_{Y}UT_{X} = T^{*}_{Y}~(\sum_{j=1}^{n} \Theta_{a_j, b_j})~T_{X} = \sum_{j=1}^{n}\Theta_{T^{*}_{X}a_j, T^{*}_{Y}b_j}.$$
Therefore, $\mathbf{M}_{U,Y,X}$ is a finite rank operator from $E$ to $F$. Now let $U$ be a compact operator on $\ell^2(A)$. Thus for each
$\epsilon>0$, there exists a sequence of finite rank operators on $\ell^2(A)$, say $\{U_{\alpha}\}$, such that $\|U_{\alpha} - U\|<\epsilon$.
So
$$\|\mathbf{M}_{U_{\alpha},Y,X} - \mathbf{M}_{U,Y,X}\|\leq \|T^{*}_{Y}\|~\|U_{\alpha} - U\|~\|T_{X}\|\leq \sqrt{DD'}\epsilon.$$
As seen above, $\mathbf{M}_{U_{\alpha},Y,X}$ are finite rank. From this facts, we conclude that  $\mathbf{M}_{U,Y,X}$ is a compact operator.\\
(3) Since $U$ is positive, by \cite[Lemma 4.1]{Lance-1995}, $\langle a, Ua\rangle\geq 0$ for all $a = \{a_n\}\in \ell^2(A)$. So
$$\langle x, \mathbf{M}_{U,X,X}x\rangle = \langle x, T^{*}_{X}UT_{X}x \rangle = \langle T_{X}x, UT_{X}x\rangle\geq 0.$$
Again by \cite[Lemma 4.1]{Lance-1995}, it follows that $\mathbf{M}_{U,X,X}$ is positive.
\end{proof}
The following proposition shows that if
one of the sequences is standard Bessel sequence, invertibility of multiplier implies that the other one satisfies the lower frame condition.
\begin{proposition}
Let $X = \{x_n\}\subset E$ be a standard Bessel sequence with upper bound $D$ and $Y = \{y_n\}\subset F$ be an arbitrary sequence.
If $\mathbf{M}_{U,Y,X}$ is an invertible operator,
then $Y = \{y_n\}$ satisfies the lower frame condition.
\end{proposition}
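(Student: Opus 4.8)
The plan is to exploit invertibility of $\mathbf{M}_{U,Y,X}$ to realize every $y\in F$ as a norm-convergent combination of the $y_n$ with norm-controlled coefficients, and then feed the resulting identity for $\langle y,y\rangle$ into the Cauchy--Schwarz inequality of the standard module $\ell^2(A)$. Concretely, set $x=\mathbf{M}_{U,Y,X}^{-1}y$ and $c=\{c_n\}=U T_X x\in\ell^2(A)$; then $y=\mathbf{M}_{U,Y,X}x=T_Y^{*}c=\sum_n c_n y_n$, and pairing with $y$ gives $\langle y,y\rangle=\sum_n c_n\langle y_n,y\rangle=\langle c,w\rangle$, where $w=\{\langle y,y_n\rangle\}_n$ is the formal analysis sequence of $y$ against $Y$. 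The key point is that this identity holds for every $y\in F$ with no Bessel hypothesis on $Y$.

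First I would record the norm bound on the coefficients: since $X$ is Bessel with bound $D$ one has $\|T_X\|\le\sqrt{D}$, so $\|c\|\le K\|y\|$ with $K:=\|U\|\sqrt{D}\,\|\mathbf{M}_{U,Y,X}^{-1}\|$, which is strictly positive because $U\neq0$ and $\mathbf{M}_{U,Y,X}$ is invertible. Next I would apply the module Cauchy--Schwarz inequality $\langle a,b\rangle\langle b,a\rangle\le\|\langle b,b\rangle\|\,\langle a,a\rangle$ with $a=w$ and $b=c$. Since $\langle c,w\rangle=\langle y,y\rangle=\langle w,c\rangle$ and $\langle w,w\rangle=\sum_n\langle y,y_n\rangle\langle y_n,y\rangle$, this produces the inequality $\langle y,y\rangle^{2}\le K^{2}\|y\|^{2}\sum_n\langle y,y_n\rangle\langle y_n,y\rangle$. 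Taking $C^{*}$-norms and cancelling one factor $\|y\|^{2}$ then yields $\frac{1}{K^{2}}\|\langle y,y\rangle\|\le\big\|\sum_n\langle y,y_n\rangle\langle y_n,y\rangle\big\|$, that is, $Y$ satisfies the lower frame condition with bound $C=\|U\|^{-2}D^{-1}\|\mathbf{M}_{U,Y,X}^{-1}\|^{-2}$.

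The step I expect to be delicate is the legitimacy of Cauchy--Schwarz when $Y$ is only assumed to be an arbitrary sequence: a priori $w$ need not lie in $\ell^2(A)$ and $\sum_n\langle y,y_n\rangle\langle y_n,y\rangle$ need not converge in norm, so $\langle w,w\rangle$ may be undefined. I would circumvent this by truncation: for the finite sections $w^{(N)}=(\langle y,y_1\rangle,\dots,\langle y,y_N\rangle,0,0,\dots)\in\ell^2(A)$ the inequality is unconditional, and $\langle c,w^{(N)}\rangle=\sum_{n\le N}c_n\langle y_n,y\rangle\to\langle y,y\rangle$ as $N\to\infty$ because the series $\sum_n c_n y_n$ converges to $y$. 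Passing to the limit gives $\langle y,y\rangle^{2}\le K^{2}\|y\|^{2}\sum_{n\le N}\langle y,y_n\rangle\langle y_n,y\rangle$ for all $N$; if the full series converges I recover the bound above, while if its partial sums are norm-unbounded the lower frame inequality holds trivially. A secondary subtlety worth flagging is that the module inequality $\langle y,y\rangle^{2}\le K^{2}\|y\|^{2}Q$ does not on its own upgrade to an order bound $C\langle y,y\rangle\le Q$ (that would force $\langle y,y\rangle$ to be uniformly well-conditioned), so the faithful reading of the conclusion is the norm form of the lower frame condition.
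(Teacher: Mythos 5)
Your argument is essentially the paper's own proof: both rest on Cauchy--Schwarz in $\ell^2(A)$ applied to the pairing $\langle UT_X x, T_Y y\rangle$ (you phrase it as $\langle y,y\rangle=\langle c,w\rangle$ with $c=UT_Xx$, the paper as $\|\langle T_Y^*UT_Xx,y\rangle\|\le\|UT_Xx\|\,\|T_Yy\|$), followed by the same substitution $x=\mathbf{M}_{U,Y,X}^{-1}y$, and you land on exactly the paper's inequality $\frac{1}{D\|U\|^2\|\mathbf{M}_{U,Y,X}^{-1}\|^2}\|y\|^2\le\big\|\sum_n\langle y,y_n\rangle\langle y_n,y\rangle\big\|$ with the identical constant. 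Your two deviations are refinements rather than a different route: the truncation step soundly addresses the fact that $T_Yy$ need not lie in $\ell^2(A)$ (a point the paper silently glosses over), and the norm-versus-order subtlety you flag at the end is precisely what the paper disposes of by citing Proposition 3.8 of Jing's thesis, which licenses the passage from the norm inequality you (and the paper) derive to the lower frame condition in the order sense --- so your cautious weaker reading can in fact be upgraded to the stated conclusion by that known result.
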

\begin{proof}
For each $x\in E, y\in F$:
\begin{equation*}
\begin{split}
\|\langle \mathbf{M}_{U,Y,X}(x), y\rangle\|_{A}
&= \|\langle T^{*}_{Y}UT_{X}(x), y\rangle\|_{A}\\
&\leq \|UT_{X}(x)\|_{\ell^2(A)}~\|T_{Y}(y)\|_{\ell^2(A)}\\
&\leq \sqrt{D}\|U\|~\|x\|\|\{\langle y, y_n\rangle\}_{n}\|_{\ell^2(A)}\\
&= \sqrt{D}\|U\|~\|x\|~\|\sum_{n}\langle y, y_n\rangle\langle y_n, y\rangle\|^{1/2}.\\
\end{split}
\end{equation*}
Put $x = \mathbf{M}^{-1}_{U,Y,X}(y)$. Then
$$\dfrac{1}{D\|U\|^2\|\mathbf{M}^{-1}_{U,Y,X}\|^2}\|y\|^{2}\leq\|\sum_{n}\langle y, y_n\rangle\langle y_n, y\rangle\|.$$
Therefore by \cite[Proposition 3.8]{Jing-2006} , we conclude that $Y = \{y_n\}$ satisfies the lower frame condition and the proof is complete.
\end{proof}
Similar to the case of operators on Hilbert spaces, we also have the following perturbation
result for Hilbert modules. In the sequel, we will use this result on several occasions.
\begin{lemma}\label{perturbation lemma}
Let $E$ be a Hilbert $A$-module and $U:E\rightarrow E$ be an invertible operator on $E$. Also let $W\in L(E)$ be such that for each $x\in E$,
$\|Ux-Wx\|\leq \lambda\|x\|$ where $\lambda\in[0, \|U^{-1}\|^{-1})$. Then $W$ is invertible and
$$\dfrac{1}{\lambda+\|U\|}\|x\|\leq\|W^{-1}x\|\leq\dfrac{1}{\|U^{-1}\|^{-1}-\lambda}\|x\|.$$
\end{lemma}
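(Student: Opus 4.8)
The plan is to reduce the statement to the classical Neumann-series perturbation argument, now carried out inside the unital $C^{*}$-algebra $L(E)$, and then to read off the two norm estimates from elementary triangle inequalities. Observe first that the hypothesis $\|Ux-Wx\|\le\lambda\|x\|$ for every $x\in E$ is precisely the operator-norm bound $\|U-W\|\le\lambda$. To establish invertibility I would factor $W=U\bigl(I-U^{-1}(U-W)\bigr)$, which is legitimate since $U$ is invertible in $L(E)$. Then $\|U^{-1}(U-W)\|\le\|U^{-1}\|\,\|U-W\|\le\|U^{-1}\|\,\lambda<1$, because $\lambda<\|U^{-1}\|^{-1}$ by assumption. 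As $L(E)$ is a unital Banach algebra, the Neumann series $\sum_{k\ge 0}\bigl(U^{-1}(U-W)\bigr)^{k}$ converges in $L(E)$ and produces an inverse of $I-U^{-1}(U-W)$ inside $L(E)$; composing with $U^{-1}$ shows that $W$ is invertible with $W^{-1}\in L(E)$.

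Next I would derive the lower bound. For any $y\in E$ the triangle inequality gives $\|Wy\|\le\|Uy\|+\|Uy-Wy\|\le(\|U\|+\lambda)\|y\|$, so that $\|W\|\le\|U\|+\lambda$. Applying this to $y=W^{-1}x$ yields $\|x\|=\|WW^{-1}x\|\le(\|U\|+\lambda)\|W^{-1}x\|$, which rearranges to the asserted estimate $\frac{1}{\lambda+\|U\|}\|x\|\le\|W^{-1}x\|$.

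For the upper bound I would instead estimate $\|Wy\|$ from below. Since $\|y\|=\|U^{-1}Uy\|\le\|U^{-1}\|\,\|Uy\|$, one has $\|Uy\|\ge\|U^{-1}\|^{-1}\|y\|$, and therefore $\|Wy\|\ge\|Uy\|-\|Uy-Wy\|\ge(\|U^{-1}\|^{-1}-\lambda)\|y\|$, where the coefficient is strictly positive precisely because $\lambda<\|U^{-1}\|^{-1}$. Taking $y=W^{-1}x$ gives $\|x\|\ge(\|U^{-1}\|^{-1}-\lambda)\|W^{-1}x\|$, which is the remaining estimate $\|W^{-1}x\|\le\frac{1}{\|U^{-1}\|^{-1}-\lambda}\|x\|$.

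I expect no deep obstacle in this argument; the computations are routine once the bound $\|U-W\|\le\lambda$ is noted. The only point that genuinely uses the module framework is the invertibility step, where one must ensure that the inverse lives in $L(E)$, so that $W^{-1}$ is again adjointable and the subsequent norm manipulations are justified. This is guaranteed exactly because $L(E)$ is a $C^{*}$-algebra, whence the Neumann series converges inside $L(E)$ rather than merely in the algebra of all bounded module maps.
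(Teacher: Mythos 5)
Your proof is correct. Note that the paper itself does not write out an argument at all: its ``proof'' of Lemma \ref{perturbation lemma} is a citation of Aupetit's Neumann-series perturbation theorem and of Proposition 2.2 of Stoeva--Balazs, which is the Hilbert-space version of this exact statement with the same two bounds. Your argument --- factoring $W=U\bigl(I-U^{-1}(U-W)\bigr)$, invoking the Neumann series in the unital Banach algebra $L(E)$, and then reading off the two estimates from $\|W\|\le\|U\|+\lambda$ and $\|Wy\|\ge(\|U^{-1}\|^{-1}-\lambda)\|y\|$ applied at $y=W^{-1}x$ --- is precisely the content of those references, so in effect you have supplied in full the details the paper delegates. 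Your closing remark is also the right one to make explicit: since $L(E)$ is a norm-closed (indeed $C^{*}$-) subalgebra of the bounded module maps, the Neumann series converges in $L(E)$, so $W^{-1}$ is automatically adjointable, which is the only point where the module setting requires any care beyond the classical case.
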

\begin{proof}
It follows directly from the proofs of \cite[Theorem 3.2.3]{Aupetit-1991} and \cite[Proposition 2.2]{Stoeva-2012}.
\end{proof}
The next proposition investigates some sufficient conditions for invertibility of generalized frame multipliers.
\begin{proposition}
Let $E$ be a Hilbert $A$-module and $\{x_n\}$ be a standard frame
for $E$ with bounds $C$ and $D$. Suppose that $\{y_n\}$ is a sequence of $E$ and there exists a positive constant $\lambda<\dfrac{1}{D}\Big(\dfrac{CD^2-C^2D}{C^2+D^2}\Big)^2$
such that
\begin{equation}
\|\sum_{n} \langle x, x_n - y_n\rangle\langle x_n - y_n, x\rangle\|\leq \lambda\|x\|^{2}.
\end{equation}
Moreover, suppose that $U$ is a non-zero adjointable operator on $\ell^2(A)$ with $\|U-I\|<\dfrac{C^2}{D^2}$.
Then $\{y_n\}$ is a standard frame and $\mathbf{M}_{U,X,Y}$ is invertible.
\end{proposition}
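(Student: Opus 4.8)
The plan is to establish invertibility of $\mathbf{M}_{U,X,Y} = T_{X}^{*}UT_{Y}$ by comparing it to the frame operator $S_{X} := T_{X}^{*}T_{X}$ of $\{x_n\}$, which by the preliminaries is positive and invertible with $C \le S_{X} \le D$, hence $\|S_{X}^{-1}\|^{-1} \ge C$, and then applying Lemma~\ref{perturbation lemma} with $S_{X}$ in the role of the invertible anchor and $\mathbf{M}_{U,X,Y}$ in the role of the perturbed operator $W$ (so that the lemma's perturbation constant is a quantity I will call $\mu$). Of course, this first requires knowing that $\{y_n\}$ is a genuine standard frame, so that $T_{Y}$ is a well-defined bounded analysis operator.

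The first step is to record the consequence of the perturbation hypothesis. Since $\langle x, x_n - y_n\rangle\langle x_n - y_n, x\rangle = \langle x, y_n - x_n\rangle\langle y_n - x_n, x\rangle$, the quantity on the left of the hypothesis equals $\|(T_{X} - T_{Y})x\|^2$, so the assumption reads $\|(T_{X} - T_{Y})x\| \le \sqrt{\lambda}\,\|x\|$ for all $x \in E$. Combined with $\sqrt{C}\,\|x\| \le \|T_{X}x\| \le \sqrt{D}\,\|x\|$ and the triangle inequality, this gives $(\sqrt{C} - \sqrt{\lambda})\|x\| \le \|T_{Y}x\| \le (\sqrt{D} + \sqrt{\lambda})\|x\|$. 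A short computation shows the prescribed bound forces $\lambda < C$, so the lower estimate has a strictly positive coefficient; by the norm characterisation of the frame bounds used already in this section (cf.\ \cite[Proposition 3.8]{Jing-2006}), $\{y_n\}$ is a standard frame with upper bound $(\sqrt{D}+\sqrt{\lambda})^2$.

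Next I would estimate the distance between the two operators. Factoring $T_{X}^{*}$ out of $S_{X}x - \mathbf{M}_{U,X,Y}x = T_{X}^{*}(T_{X}x - UT_{Y}x)$ and splitting $T_{X}x - UT_{Y}x = (T_{X} - T_{Y})x + (I-U)T_{Y}x$, together with $\|T_{X}^{*}\| \le \sqrt{D}$, the bound $\|(T_{X}-T_{Y})x\| \le \sqrt{\lambda}\|x\|$, and the upper frame estimate $\|T_{Y}x\| \le (\sqrt{D}+\sqrt{\lambda})\|x\|$ from the previous step, yields
\begin{equation*}
\|S_{X}x - \mathbf{M}_{U,X,Y}x\| \le \sqrt{D}\Big(\sqrt{\lambda} + \|I-U\|(\sqrt{D}+\sqrt{\lambda})\Big)\|x\| =: \mu\,\|x\|.
\end{equation*}

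The crux is to verify that $\mu < C$, for then $\mu < C \le \|S_{X}^{-1}\|^{-1}$ and Lemma~\ref{perturbation lemma} immediately yields that $\mathbf{M}_{U,X,Y}$ is invertible. Since $\mu$ is increasing in both $\|I-U\|$ and $\lambda$, it suffices to test the boundary value $\|I-U\| = C^2/D^2$; substituting and rearranging reduces $\mu \le C$ to $\sqrt{D\lambda}\,(C^2+D^2)/D^2 \le C(D-C)/D$, i.e.\ to $\lambda \le \frac{1}{D}\big(\frac{CD^2-C^2D}{C^2+D^2}\big)^2$, which is precisely the hypothesis, and the strict inequalities on $\|I-U\|$ and $\lambda$ then deliver $\mu < C$. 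I expect this last bookkeeping to be the main obstacle: the threshold on $\lambda$ is engineered to be exactly sharp against the estimate $\mu$, so the argument hinges on carrying the two constraints through the chain of triangle inequalities without losing any slack.
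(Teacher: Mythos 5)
Your proof is correct, and the decisive arithmetic does close: with $\mu = \sqrt{D}\bigl(\sqrt{\lambda}+\|I-U\|(\sqrt{D}+\sqrt{\lambda})\bigr)$ one indeed gets $\mu < C \le \|S_X^{-1}\|^{-1}$ under exactly the stated bounds on $\lambda$ and $\|U-I\|$, so Lemma \ref{perturbation lemma} applies. Your route differs from the paper's in two respects. First, for the frame part the paper simply cites \cite[Theorem 3.2]{Han-Jing-Mohapatra}, while you re-derive it from $T_Y = T_X - T_{X-Y}$ together with the norm characterization of the lower frame condition (\cite[Proposition 3.8]{Jing-2006}, the same citation the paper itself invokes for this purpose in an earlier proposition); this is legitimate, though note that the upper (Bessel) bound for $Y$ comes from adjointability of $T_Y$ rather than from that proposition. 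Second, and more substantively, the paper applies the perturbation lemma \emph{twice}: it first shows $\mathbf{M}_{U,X,X}$ is invertible as a perturbation of $S_X$ (error $\le D\|U-I\| < C^2/D$), and then perturbs $\mathbf{M}_{U,X,Y}$ off $\mathbf{M}_{U,X,X}$ (error $\le \|U\|\sqrt{D\lambda}$), a step that needs the quantitative conclusion of the lemma, namely $1/\|\mathbf{M}^{-1}_{U,X,X}\| \ge \|S_X^{-1}\|^{-1} - C^2/D \ge C - C^2/D$. You apply the lemma once, comparing $\mathbf{M}_{U,X,Y}$ directly with $S_X$ via the splitting $UT_Y - T_X = (T_Y - T_X) + (U-I)T_Y$, whereas the paper's two steps amount to the splitting $U(T_Y - T_X) + (U-I)T_X$. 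The two decompositions consume the hypotheses identically --- both reduce to $\sqrt{D\lambda}\,(C^2+D^2)/D^2 < C(D-C)/D$, which is precisely the stated threshold on $\lambda$ --- so neither is sharper; yours is more economical (a single perturbation, no inverse-norm bookkeeping), while the paper's version isolates the invertibility of the intermediate multiplier $\mathbf{M}_{U,X,X}$, which has some independent interest.
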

\begin{proof}
The first part follows from \cite[Theorem 3.2]{Han-Jing-Mohapatra}. Now, let us deal with the second claim.
suppose  $S_{X}$ is the frame operator associated to $X=\{x_n\}$. For each $x\in E$:
 \begin{equation*}
\begin{split}
\|\mathbf{M}_{U,X,X}(x) - S_{X}(x)\|
&= \|\mathbf{M}_{U,X, X}(x) - \mathbf{M}_{I,X,X}(x)\|\\
&= \|\mathbf{M}_{U-I,X,X}(x)\|\\
&= \|T^{*}_{X}~(U-I)~T_{X}(x)\|\\
&\leq \|T^{*}_{X}\|~\|U-I\|~\|T_{X}(x)\|\\
&< (C^2/D)\|x\|.\\
\end{split}
\end{equation*}
So by Lemma \ref{perturbation lemma}, $\mathbf{M}_{U,X,X}$ is an invertible operator with
$$\dfrac{1}{\|S_{X}\|+C^2/D}\leq\|\mathbf{M}^{-1}_{U,X,X}\|\leq \dfrac{1}{\|S^{-1}_{X}\|^{-1}-C^2/D}~.$$
Now for every $x\in E$,
\begin{equation*}
\begin{split}
\|\mathbf{M}_{U,X,Y}(x) - \mathbf{M}_{U,X,X}(x)\|
&= \|\mathbf{M}_{U,X,Y-X}(x)\|\\
&= \|T^{*}_{X}~U~T_{Y-X}(x)\|\\
&\leq \|T^{*}_{X}\|~\|U\|~\|T_{Y-X}(x)\|\\
&\leq \|U\|\sqrt{D}\sqrt{\lambda}\|x\|.\\
\end{split}
\end{equation*}
If we show that $\|U\|\sqrt{D}\sqrt{\lambda}<\dfrac{1}{\|\mathbf{M}^{-1}_{U,X,X}\|}$, then the proof will be completed. But
$$\|U\|\sqrt{D}\sqrt{\lambda}\leq\dfrac{C^2+D^2}{D^2}\sqrt{D}\sqrt{\lambda}<C-\dfrac{C^2}{D}\leq
\|S^{-1}_{X}\|^{-1}-\dfrac{C^2}{D}\leq\dfrac{1}{\|\mathbf{M}^{-1}_{U,X,X}\|},$$
and so by Lemma \ref{perturbation lemma} the result holds.
\end{proof}
The following two propositions contain sufficient conditions for the invertibility of frame multipliers.
\begin{proposition}
Let $Y=\{y_n\}$ be a standard frame for Hilbert $A$-module $E$ with bounds $C$ and $D$, $~W:E\rightarrow E$ be an adjointable and bijective operator
and $x_n = W(y_n)$ for each $n$. Moreover let $U$ be a bounded operator on $\ell^2(A)$ such that $\|U-I\|<C/D$. Then the following statements hold:
\begin{enumerate}
  \item $X = \{x_n\}$ is a standard frame for $E$.
  \item $\mathbf{M}_{U,Y,X}$$(\text{resp.}~\mathbf{M}_{U,X,Y})$ is invertible and $\mathbf{M}^{-1}_{U,Y,X} = (W^{-1})^*~\mathbf{M}^{-1}_{U,Y,Y}$
$(\text{resp.}~\mathbf{M}_{U,X,Y}^{-1} = \mathbf{M}^{-1}_{U,Y,Y}(W^{-1}))$.
\end{enumerate}
\begin{proof}
(1) Follows from \cite[Theorem 2.5]{ARAMBASIC-2007}.\\
(2) First note that $\mathbf{M}_{U,Y,X} = \mathbf{M}_{U,Y,Y}~W^*$.Indeed
\begin{equation*}
\begin{split}
\mathbf{M}_{U,Y,Y}~W^*(f)
&= \sum_k \sum_j \langle W^{*}(f), yj\rangle y_k\\
&= \sum_k \sum_j \langle f, W(y_j)\rangle y_k \\
&= \mathbf{M}_{U,Y,X}(f).\\
\end{split}
\end{equation*}
So it is enough to prove that $\mathbf{M}_{U,Y,Y}$ is invertible. For every $x\in E$,
$$\|\mathbf{M}_{U,Y,Y}(x) - S_{Y}(x)\| =
 \|\mathbf{M}_{U-I,Y,Y}(x)\|\leq D\|U-I\|\|x\|<C\|x\|.$$
Since $C\leq\dfrac{1}{\|S^{-1}_{Y}\|}$, it follows from Lemma \ref{perturbation lemma} that $\mathbf{M}_{U,Y,Y}$ is invertible.
The invertibility of $\mathbf{M}_{U,X,Y}$ is obtained with the same argument.
\end{proof}
\end{proposition}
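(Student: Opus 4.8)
The plan is to reduce both multipliers in part (2) to the ``diagonal'' multiplier $\mathbf{M}_{U,Y,Y}$ by exploiting the relation $x_n = W(y_n)$, and then to prove $\mathbf{M}_{U,Y,Y}$ invertible by a perturbation argument against the frame operator $S_Y$. The driving observation is the identity of analysis operators
\[
T_X x = \{\langle x, x_n\rangle\}_n = \{\langle x, W y_n\rangle\}_n = \{\langle W^* x, y_n\rangle\}_n = T_Y(W^* x),
\]
so that $T_X = T_Y W^*$. For part (1) this immediately yields $S_X = T_X^* T_X = W S_Y W^*$; since $W$ is bijective and adjointable, $W^*$ is likewise bijective with adjointable inverse $(W^*)^{-1} = (W^{-1})^*$, and $S_Y$ is positive and invertible, so $S_X$ is positive and invertible and hence $X$ is a standard frame. (Alternatively one may simply invoke the cited result on operator images of frames.)

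For part (2) I would first record the two factorizations obtained from $T_X = T_Y W^*$:
\[
\mathbf{M}_{U,Y,X} = T_Y^* U T_X = T_Y^* U T_Y W^* = \mathbf{M}_{U,Y,Y}\, W^*,
\]
and, taking adjoints of analysis operators,
\[
\mathbf{M}_{U,X,Y} = T_X^* U T_Y = W\, T_Y^* U T_Y = W\, \mathbf{M}_{U,Y,Y}.
\]
Because $W$ and $W^*$ are already invertible, both multipliers are invertible as soon as $\mathbf{M}_{U,Y,Y}$ is, and the inverse formulas fall out of $(AB)^{-1}=B^{-1}A^{-1}$ together with $(W^*)^{-1}=(W^{-1})^*$, giving exactly $\mathbf{M}^{-1}_{U,Y,X} = (W^{-1})^*\mathbf{M}^{-1}_{U,Y,Y}$ and $\mathbf{M}^{-1}_{U,X,Y} = \mathbf{M}^{-1}_{U,Y,Y}(W^{-1})$.

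It therefore remains to prove $\mathbf{M}_{U,Y,Y}$ invertible, and here I would apply the perturbation result (Lemma \ref{perturbation lemma}) with the invertible operator taken to be the frame operator $S_Y = \mathbf{M}_{I,Y,Y}$. Estimating,
\[
\|\mathbf{M}_{U,Y,Y}x - S_Y x\| = \|\mathbf{M}_{U-I,Y,Y}x\| = \|T_Y^*(U-I)T_Y x\| \le \|T_Y^*\|\,\|U-I\|\,\|T_Y x\| \le D\|U-I\|\,\|x\|,
\]
where the bound $\|T_Y\| = \|T_Y^*\| \le \sqrt{D}$ comes from the upper frame bound. Since $\|U-I\| < C/D$, the Lipschitz constant satisfies $\lambda = D\|U-I\| < C \le \|S_Y^{-1}\|^{-1}$, the last inequality being the lower frame estimate $S_Y \ge C$. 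Lemma \ref{perturbation lemma} then delivers invertibility of $\mathbf{M}_{U,Y,Y}$, completing the argument.

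The main obstacle is not any single computation but making the constants line up in the perturbation step: one must push the adjoint $W^*$ cleanly across the inner product to obtain $T_X = T_Y W^*$, and then verify the strict inequality $D\|U-I\| < \|S_Y^{-1}\|^{-1}$, which is precisely what the hypothesis $\|U-I\|<C/D$ is engineered to guarantee. A secondary point worth stating explicitly is that a bijective adjointable operator on a Hilbert $C^*$-module has an adjointable inverse with $(W^{-1})^* = (W^*)^{-1}$; this is what legitimizes both the factorization $S_X = W S_Y W^*$ in part (1) and the final inverse formulas in part (2).
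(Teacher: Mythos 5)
Your proof is correct and takes essentially the same route as the paper: the identical factorizations $\mathbf{M}_{U,Y,X} = \mathbf{M}_{U,Y,Y}\,W^{*}$ and $\mathbf{M}_{U,X,Y} = W\,\mathbf{M}_{U,Y,Y}$, followed by the identical perturbation argument against $S_{Y}$ using Lemma \ref{perturbation lemma} with the estimate $D\|U-I\| < C \le \|S_{Y}^{-1}\|^{-1}$. The only differences are cosmetic and in your favor: you derive the operator identity $T_{X} = T_{Y}W^{*}$ once and reuse it, which makes the factorizations and the inverse formulas immediate and also yields a self-contained proof of part (1) via $S_{X} = W S_{Y} W^{*}$ where the paper simply cites an external theorem.
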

\begin{proposition}
Let $X=\{x_n\}$ be a standard frame for Hilbert $A$-module $E$ with upper bound $D$ and $X^d=\{x^{d}_{n}\}$ be a dual frame of $X$.
Also let $U$ be a bounded operator on $\ell^2(A)$ such that $\|U-I\|<1/2D$. Then the multiplier $\mathbf{M}_{U,X,X^d}$$(\text{resp.}~\mathbf{M}_{U,X^d,X})$
is invertible.
\end{proposition}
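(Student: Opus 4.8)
The plan is to realize $\mathbf{M}_{U,X,X^{d}}$ as a small perturbation of the identity operator on $E$ and then invoke Lemma \ref{perturbation lemma}. The anchor of the whole argument is the observation that, because $X^{d}$ is a dual frame of $X$, the reconstruction formula for the dual frame $X^{d}$ (in the form $x=\sum_{n}\langle x,x^{d}_{n}\rangle x_{n}$) gives
\begin{equation*}
\mathbf{M}_{I,X,X^{d}}(x)=T^{*}_{X}T_{X^{d}}(x)=\sum_{n}\langle x,x^{d}_{n}\rangle x_{n}=x,
\end{equation*}
so that $\mathbf{M}_{I,X,X^{d}}=\mathrm{id}_{E}$; likewise $\mathbf{M}_{I,X^{d},X}=\mathrm{id}_{E}$ from the companion formula $x=\sum_{n}\langle x,x_{n}\rangle x^{d}_{n}$. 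Thus both symbol-$I$ multipliers are exactly the (trivially invertible) identity, which is the natural operator to perturb around.

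Next I would use linearity of the multiplier in its symbol, exactly as exploited in the preceding proofs, to write $\mathbf{M}_{U,X,X^{d}}-\mathrm{id}_{E}=\mathbf{M}_{U,X,X^{d}}-\mathbf{M}_{I,X,X^{d}}=\mathbf{M}_{U-I,X,X^{d}}=T^{*}_{X}(U-I)T_{X^{d}}$, and estimate, for each $x\in E$,
\begin{equation*}
\|\mathbf{M}_{U,X,X^{d}}(x)-x\|\leq\|T^{*}_{X}\|\,\|U-I\|\,\|T_{X^{d}}(x)\|\leq\sqrt{D}\,\|U-I\|\,\|T_{X^{d}}(x)\|,
\end{equation*}
using that $X$ has upper bound $D$, whence $\|T^{*}_{X}\|=\|T_{X}\|\leq\sqrt{D}$. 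With $\mathrm{id}_{E}$ playing the role of the invertible operator in Lemma \ref{perturbation lemma} (so that $\|(\mathrm{id}_{E})^{-1}\|^{-1}=1$), invertibility of $\mathbf{M}_{U,X,X^{d}}$ follows as soon as the right-hand side is dominated by $\lambda\|x\|$ with $\lambda<1$.

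The one remaining ingredient, and the step I expect to be the main obstacle, is controlling $\|T_{X^{d}}(x)\|$, i.e. the upper (Bessel) bound of the dual frame $X^{d}$, since the hypotheses only name the bound $D$ of $X$ itself. Once $\|T_{X^{d}}(x)\|\leq\sqrt{D^{d}}\,\|x\|$ is in hand, the perturbation constant becomes $\lambda=\sqrt{D\,D^{d}}\,\|U-I\|$, and it is precisely the assumption $\|U-I\|<\tfrac{1}{2D}$ that must force $\lambda<1$. I would therefore pin down the Bessel bound $D^{d}$ of $X^{d}$ in terms of the data of $X$ (note that duality already forces the \emph{lower} bound of $X^{d}$ to be at least $1/D$, via $\|x\|\leq\|T^{*}_{X}\|\,\|T_{X^{d}}x\|\leq\sqrt{D}\,\|T_{X^{d}}x\|$) and verify that under $\|U-I\|<\tfrac{1}{2D}$ one indeed obtains $\sqrt{D\,D^{d}}\,\|U-I\|<1$; Lemma \ref{perturbation lemma} then delivers invertibility together with the accompanying norm bounds on $\mathbf{M}^{-1}_{U,X,X^{d}}$. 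The stated \emph{resp.} case $\mathbf{M}_{U,X^{d},X}=T^{*}_{X^{d}}UT_{X}$ is handled identically: one perturbs around $\mathbf{M}_{I,X^{d},X}=\mathrm{id}_{E}$ and applies the same two norm estimates with the roles of $X$ and $X^{d}$ interchanged.
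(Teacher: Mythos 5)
Your proof follows exactly the same route as the paper's: use duality to get $\mathbf{M}_{I,X,X^d}=T^{*}_{X}T_{X^d}=\mathrm{id}_E$, write $\mathbf{M}_{U,X,X^d}-\mathrm{id}_E=\mathbf{M}_{U-I,X,X^d}=T^{*}_{X}(U-I)T_{X^d}$, and invoke Lemma \ref{perturbation lemma} around the identity. The one step you leave open --- bounding $\|T_{X^d}\|$, i.e.\ the Bessel bound $D^{d}$ of the dual frame, in terms of the data of $X$ --- is precisely the step the paper's proof silently skips: its inequality $\|\mathbf{M}_{U-I,X,X^d}(x)\|\leq D\|U-I\|\,\|x\|$ is valid only if $\|T^{*}_{X}\|\,\|T_{X^d}\|\leq D$, i.e.\ only if $X^{d}$ also has Bessel bound $D$, which is nowhere assumed.

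Unfortunately that step cannot be completed: duality controls only the lower frame bound of $X^{d}$ (your estimate $\geq 1/D$ is correct), while its upper bound can be arbitrarily large, so no inequality of the form $\sqrt{DD^{d}}\,\|U-I\|<1$ follows from $\|U-I\|<1/2D$. In fact the proposition is false as literally stated. Take $E=\mathbb{C}$ viewed as a Hilbert $\mathbb{C}$-module, $X=\{1,0,0,\dots\}$ (a tight standard frame with $D=1$), the dual frame $X^{d}=\{1,-4,0,0,\dots\}$, and $U=I+\tfrac{1}{4}\Theta_{e_2,e_1}\in L(\ell^2)$, so that $\|U-I\|=\tfrac{1}{4}<\tfrac{1}{2D}$. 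Then $T_{X^d}(x)=(x,-4x,0,\dots)$, hence $UT_{X^d}(x)=(0,-4x,0,\dots)$, and $\mathbf{M}_{U,X,X^d}(x)=T^{*}_{X}UT_{X^d}(x)=0$: the multiplier is the zero operator, certainly not invertible. So the obstacle you flagged is not a technical nuisance but a genuine gap, shared by the paper itself; both your argument and the paper's become correct only under the additional hypothesis that $D$ is a common Bessel bound for $X$ and $X^{d}$ (equivalently, under the hypothesis $\|U-I\|<1/(2\sqrt{DD^{d}})$), in which case $\lambda=\sqrt{DD^{d}}\,\|U-I\|<\tfrac{1}{2}$ and Lemma \ref{perturbation lemma} finishes the proof exactly as you outlined.
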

\begin{proof}
For every $x\in E$,
$$\|\mathbf{M}_{U,X,X^d}(x) - x\| = \|\mathbf{M}_{U-I,X,X^d}\|\leq D\|U-I\|\|x\|<\dfrac{1}{2}\|x\|.$$
So by Lemma \ref{perturbation lemma}, $\mathbf{M}_{U,X,X^d}$ is invertible.
\end{proof}
\begin{proposition}
Let $Y=\{y_n\}$ be a standard frame for Hilbert $A$-module $E$ with bounds $C$ and $D$ and $\tilde{Y} = \{\tilde{y}_n\}$ be its canonical dual frame.
\begin{enumerate}
  \item If $X = \{x_n\}$ be a standard Bessel sequence such that
  \begin{equation}\label{essential inequality}
  \sum_{n}\|x_n - \tilde{y}_{n}\|^{2}<1/4D,
  \end{equation}
  then $\mathbf{M}_{I,Y,X}$ is invertible.
  \item Let $X = \{x_n\}$ be a standard Bessel sequence and \eqref{essential inequality} holds.
  Also let $U$ be a bounded operator on $\ell^2(A)$ with $\|U\|<1$ and $\|U-I\|<\sqrt{C/4D}$.
  Then $\mathbf{M}_{U,Y,X}$ is invertible.
\end{enumerate}
\end{proposition}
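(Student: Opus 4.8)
The plan is to deduce both statements from the perturbation Lemma~\ref{perturbation lemma}, comparing each multiplier against an operator already known to be invertible. The decisive remark is that the reconstruction formula \eqref{recons formula} reads precisely $\mathbf{M}_{I,Y,\tilde{Y}}(x) = T^{*}_{Y}T_{\tilde{Y}}(x) = \sum_{n}\langle x, \tilde{y}_{n}\rangle y_{n} = x$, so that $\mathbf{M}_{I,Y,\tilde{Y}}$ is the identity operator on $E$. Consequently, for (1) I would perturb $\mathbf{M}_{I,Y,X}$ off the identity, and for (2) I would first prove that $\mathbf{M}_{U,Y,\tilde{Y}}$ is invertible and then perturb $\mathbf{M}_{U,Y,X}$ off it. In both cases the algebraic identities $T_{X-\tilde{Y}} = T_{X} - T_{\tilde{Y}}$ and $\mathbf{M}_{U,Y,X} - \mathbf{M}_{U,Y,\tilde{Y}} = \mathbf{M}_{U,Y,X-\tilde{Y}}$ reduce the estimates to controlling the single analysis operator $T_{X-\tilde{Y}}$.

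The technical core is therefore one Bessel-type bound for the difference sequence $\{x_{n} - \tilde{y}_{n}\}$. Since the Hilbert-space argument that a square-summable family is automatically Bessel does not transfer verbatim, I would instead invoke the Cauchy--Schwarz inequality for Hilbert $C^{*}$-modules (see \cite{Lance-1995}), namely $\langle x, z\rangle\langle z, x\rangle \leq \|z\|^{2}\langle x, x\rangle$, with $z = x_{n} - \tilde{y}_{n}$; summing these positive elements of $A$ and passing to the norm yields
\[
\|T_{X-\tilde{Y}}(x)\|^{2} = \Big\|\sum_{n}\langle x, x_{n} - \tilde{y}_{n}\rangle\langle x_{n} - \tilde{y}_{n}, x\rangle\Big\| \leq \Big(\sum_{n}\|x_{n} - \tilde{y}_{n}\|^{2}\Big)\|x\|^{2}.
\]
Writing $M := \sum_{n}\|x_{n} - \tilde{y}_{n}\|^{2} < \tfrac{1}{4D}$ by \eqref{essential inequality}, this gives $\|T_{X-\tilde{Y}}(x)\| \leq \sqrt{M}\,\|x\|$ with $\sqrt{DM} < \tfrac{1}{2}$; in particular $\{x_{n} - \tilde{y}_{n}\}$ is a standard Bessel sequence, so all multipliers appearing below are well defined.

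For part (1), combining the last estimate with $\|T^{*}_{Y}\| \leq \sqrt{D}$ gives $\|\mathbf{M}_{I,Y,X}(x) - x\| = \|\mathbf{M}_{I,Y,X-\tilde{Y}}(x)\| \leq \sqrt{D}\,\|T_{X-\tilde{Y}}(x)\| \leq \lambda\|x\|$ with $\lambda := \sqrt{DM} < \tfrac{1}{2} < 1$. Since the identity on $E$ is invertible with $\|\mathrm{Id}^{-1}\|^{-1} = 1 > \lambda$, Lemma~\ref{perturbation lemma} immediately yields that $\mathbf{M}_{I,Y,X}$ is invertible.

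For part (2) I would apply the perturbation lemma twice. First, comparing $\mathbf{M}_{U,Y,\tilde{Y}}$ with $\mathbf{M}_{I,Y,\tilde{Y}} = \mathrm{Id}_{E}$ and using that $\tilde{Y}$ has upper Bessel bound $C^{-1}$,
\[
\|\mathbf{M}_{U,Y,\tilde{Y}}(x) - x\| = \|\mathbf{M}_{U-I,Y,\tilde{Y}}(x)\| \leq \sqrt{D}\,\|U-I\|\,\tfrac{1}{\sqrt{C}}\|x\| < \sqrt{\tfrac{D}{C}}\sqrt{\tfrac{C}{4D}}\,\|x\| = \tfrac{1}{2}\|x\|,
\]
so with $\lambda_{1} := \sqrt{D/C}\,\|U-I\| < \tfrac12$ Lemma~\ref{perturbation lemma} makes $\mathbf{M}_{U,Y,\tilde{Y}}$ invertible and, crucially, supplies the inverse estimate $\|\mathbf{M}^{-1}_{U,Y,\tilde{Y}}\| \leq (1-\lambda_{1})^{-1} < 2$, i.e.\ $\|\mathbf{M}^{-1}_{U,Y,\tilde{Y}}\|^{-1} > \tfrac{1}{2}$. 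Second, comparing $\mathbf{M}_{U,Y,X}$ with $\mathbf{M}_{U,Y,\tilde{Y}}$ gives $\|\mathbf{M}_{U,Y,X-\tilde{Y}}(x)\| \leq \sqrt{D}\,\|U\|\,\sqrt{M}\,\|x\| = \|U\|\sqrt{DM}\,\|x\|$, and here the hypothesis $\|U\| < 1$ together with $\sqrt{DM} < \tfrac12$ yields $\|U\|\sqrt{DM} < \tfrac{1}{2} < \|\mathbf{M}^{-1}_{U,Y,\tilde{Y}}\|^{-1}$, which is exactly the condition needed to perturb $\mathbf{M}_{U,Y,\tilde{Y}}$ to $\mathbf{M}_{U,Y,X}$ via Lemma~\ref{perturbation lemma}. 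I expect the main obstacle to lie in this last step: one must extract the explicit inverse bound $\|\mathbf{M}^{-1}_{U,Y,\tilde{Y}}\|^{-1} > \tfrac{1}{2}$ from the first perturbation and then verify that the two separate hypotheses $\|U-I\| < \sqrt{C/4D}$ and $\|U\| < 1$ are each used precisely once — the former to invert $\mathbf{M}_{U,Y,\tilde{Y}}$, the latter to close the gap to $\mathbf{M}_{U,Y,X}$.
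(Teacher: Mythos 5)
Your proposal is correct and follows essentially the same route as the paper: the same identification $\mathbf{M}_{I,Y,\tilde{Y}}=\mathrm{Id}$ via the reconstruction formula, the same Cauchy--Schwarz/Bessel estimate $\|T_{X-\tilde{Y}}(x)\|\leq\sqrt{M}\,\|x\|$, and the same two bounds $\sqrt{D}\|U\|\sqrt{M}<\tfrac{1}{2}$ and $\sqrt{D/C}\,\|U-I\|<\tfrac{1}{2}$, all fed into Lemma~\ref{perturbation lemma}. The only difference is organizational: in part (2) the paper adds the two error terms by the triangle inequality and perturbs the identity once, whereas you chain two applications of the perturbation lemma through $\mathbf{M}_{U,Y,\tilde{Y}}$ (which forces you to track the inverse bound $\|\mathbf{M}^{-1}_{U,Y,\tilde{Y}}\|^{-1}>\tfrac{1}{2}$), an equivalent bookkeeping of the same argument.
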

\begin{proof}
(1) For every $x\in E$,
\begin{equation*}
\begin{split}
\|\mathbf{M}_{I,Y,X}(x) - x\|
& = \|T^{*}_{Y}~T_{X}(x) - T^{*}_{Y}~T_{\tilde{Y}}(x)\|\\
& = \|T^{*}_{Y}~T_{X - \tilde{Y}}(x)\|\\
& \leq \sqrt D \|\{\langle x, x_n - \tilde{y}_{n}\rangle\}_n\|_{\ell^2(A)}\\
& = \sqrt D \|\sum_n \langle x, x_n - \tilde{y}_{n}\rangle \langle x_n - \tilde{y}_{n}, x \rangle\|^{1/2}\\
& \leq \sqrt D\Big(\sum_{n}\|x\|^{2}\|x_n - \tilde{y}_{n}\|^{2}\Big)^{1/2}\\
& = \sqrt D\|x\| \Big(\sum_{n} \|x_n - \tilde{y}_{n}\|^{2}\Big)^{1/2}\\
& < \sqrt D (1/2\sqrt D)\|x\| = 1/2\|x\|,\\
\end{split}
\end{equation*}
and so $\mathbf{M}_{I,Y,X}$ is invertible.\\
(2) For every $x\in E$ we have:
\begin{small}
\begin{equation*}
\begin{split}
\|\mathbf{M}_{U,Y,X}(x) - x\|
& \leq \|\mathbf{M}_{U,Y,X}(x) - \mathbf{M}_{U,Y,\tilde{Y}}(x)\|+\|\mathbf{M}_{U,Y,\tilde{Y}}(x) - \mathbf{M}_{I,Y,\tilde{Y}}(x)\|\\
& = \|T^{*}_{Y}UT_{X-\tilde{Y}}(x)\| + \|T^{*}_{Y}(U-I)T_{\tilde{Y}}(x)\|\\
& \leq \sqrt D\|U\|\|T_{X-\tilde{Y}}(x)\| + (\sqrt{D/C}) \|U-I\|\|x\|\\
& \leq \sqrt D\|U\|\|x\|\Big(\sum_n \|x_n - \tilde{y}_{n}\|^2\Big)^{(1/2)} + (\sqrt{D/C}) \|U-I\|\|x\|\\
& < \|x\|.\\
\end{split}
\end{equation*}
\end{small}
Hence we conclude that $\mathbf{M}_{U,Y,X}$ is invertible.
\end{proof}
\section{Generalized modular Riesz multipliers}
The rest of this article is devoted to studying some properties of Riesz multipliers.
For this aim, we borrow the following definition from \cite{A.Khosravi-B.Khosravi}.
\begin{definition}
Let $A$ be a unital $C^{*}$- algebra and $E$ be a finitely or countably generated Hilbert $A$-module. A sequence $\{x_n\}$ is a \emph{modular Riesz basis}
for $E$ if there exists an adjointable and invertible operator $U:\ell^2(A)\rightarrow E$ such that $U(e_n) = x_n$ for each $n$, where $\{e_n\}$
is the orthonormal basis of $\ell^2(A)$.
\end{definition}
The next statement is a generalization of the second part of \cite[Theorem 4.3]{Khosravi-Mirzaee Azandaryani}.
\begin{proposition}
Let $X=\{x_n\}$ and $Y=\{y_n\}$ be modular Riesz bases of Hilbert $A$-modules $E$ and $F$, respectively.
Then the mapping $U\mapsto \mathbf{M}_{U,Y,X}$ is injective from $L(\ell^2(A))$ to $L(E, F)$.
\end{proposition}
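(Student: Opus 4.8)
The goal is to show that the map $U \mapsto \mathbf{M}_{U,Y,X}$ is injective, which is equivalent to showing its kernel is trivial: if $\mathbf{M}_{U,Y,X} = 0$, then $U = 0$. So the plan is to fix $U \in L(\ell^2(A))$ with $T^*_Y U T_X = 0$ on all of $E$, and deduce $U = 0$. The essential feature we must exploit is that $X$ and $Y$ are modular Riesz bases, not merely Bessel sequences; by definition there are adjointable invertible operators $V \colon \ell^2(A) \to E$ and $W \colon \ell^2(A) \to F$ with $V(e_n) = x_n$ and $W(e_n) = y_n$, where $\{e_n\}$ is the standard orthonormal basis of $\ell^2(A)$.

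First I would rewrite the analysis and synthesis operators in terms of $V$ and $W$. Since $x_n = V(e_n)$, the analysis operator satisfies $T_X(x) = \{\langle x, x_n\rangle\}_n = \{\langle x, V e_n\rangle\}_n = \{\langle V^* x, e_n\rangle\}_n$, which is exactly the decomposition of $V^* x$ in the basis $\{e_n\}$; hence $T_X = V^*$. Dually, the synthesis operator is $T^*_Y = W$. Substituting these into the hypothesis $\mathbf{M}_{U,Y,X} = T^*_Y U T_X = 0$ gives
\begin{equation*}
W\, U\, V^* = 0.
\end{equation*}

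The main obstacle — really the only one — is then disposed of immediately by invertibility. Because $X$ and $Y$ are modular Riesz bases, $V$ and $W$ are invertible (and adjointable, so $V^*$ is invertible as well). Left-multiplying the identity $WUV^* = 0$ by $W^{-1}$ and right-multiplying by $(V^*)^{-1}$ yields $U = 0$. Thus the kernel of the map $U \mapsto \mathbf{M}_{U,Y,X}$ contains only the zero operator, and the map is injective. I would close by noting that the only place the full strength of the modular Riesz basis hypothesis is used is in guaranteeing that $V$ and $W$ are invertible; for general Bessel sequences $T_X$ and $T^*_Y$ need not be injective or surjective, and injectivity of the symbol map can fail, which is precisely why the proposition is stated for Riesz bases rather than arbitrary frames.
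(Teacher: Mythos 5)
Your proof is correct, and at bottom it runs on the same mechanism as the paper's: cancel the synthesis operator of $Y$ on the left (it is injective) and the analysis operator of $X$ on the right (it is surjective). The implementations differ, though. The paper takes two symbols $U_1,U_2$ with equal multipliers and invokes external results --- Theorem 3.1 of Khosravi--Khosravi (uniqueness of coefficients with respect to a modular Riesz basis, i.e.\ injectivity of $T^{*}_{Y}$) and Proposition 3.1 there together with Theorem 4.9 of Jing's thesis (surjectivity of $T_{X}$). You instead reduce to showing the kernel is trivial (legitimate, since $U\mapsto \mathbf{M}_{U,Y,X}$ is linear) and obtain both cancellations at once from the explicit identifications $T_{X}=V^{*}$ and $T^{*}_{Y}=W$, which follow directly from the definition of a modular Riesz basis; this yields the factorization $\mathbf{M}_{U,Y,X}=WUV^{*}$ with $W$ and $V^{*}$ invertible. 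Your route is more self-contained and in fact proves more: it shows that $U\mapsto \mathbf{M}_{U,Y,X}$ is a linear bijection from $L(\ell^2(A))$ onto $L(E,F)$, since any $S\in L(E,F)$ is the multiplier with symbol $W^{-1}S(V^{*})^{-1}$, and it makes the paper's later equivalence ($U$ invertible iff $\mathbf{M}_{U,Y,X}$ invertible) transparent. The one step you should justify rather than treat as obvious is the invertibility of $V^{*}$: in Hilbert $C^{*}$-modules, unlike Hilbert spaces, the fact that a bijective adjointable operator has a bijective adjoint (equivalently, an adjointable inverse) requires an argument --- for instance, $V^{*}V$ is positive and bounded below, hence invertible in the $C^{*}$-algebra $L(\ell^2(A))$, so $V^{*}$ has the adjointable right inverse $V(V^{*}V)^{-1}$ and is injective because $V$ is surjective --- or else it is built into the meaning of ``invertible'' in the definition of a modular Riesz basis; either way, a citation or a sentence is needed there.
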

\begin{proof}
Suppose that $\mathbf{M}_{U_1,Y,X} = \mathbf{M}_{U_2,Y,X}$. So for each $x\in E$, $\mathbf{M}_{U_1,Y,X}(x)\\=\mathbf{M}_{U_2,Y,X}(x)$.
Thus by definition, we have
$$\sum_{n} \Big(U_1(\{\langle x, x_n\rangle\})\Big)y_n = \sum_{n} \Big(U_2(\{\langle x, x_n\rangle\})\Big)y_n.$$
Since $Y$ is a modular Riesz basis, by \cite[Theorem 3.1]{A.Khosravi-B.Khosravi}, it follows that
$$U_1(\{\langle x, x_n\rangle\}) = U_2(\{\langle x, x_n\rangle\}).$$
Now, since $\{x_n\}$ is a modular Riesz basis, by \cite[Proposition 3.1]{A.Khosravi-B.Khosravi} and \cite[Theorem 4.9]{Jing-2006}, the associated analysis operator is surjective and hence we conclude $U_1 = U_2$.
\end{proof}
In \cite[Lemma 4.1]{Khosravi-Mirzaee Azandaryani}, it is shown that the modular Riesz basis $\{x_n\}$ and its canonical dual $\{\tilde{x}_n\} = \{S^{-1}x_n\}$ form a pair of biorthogonal sequences.
Due to this fact, we check some properties of modular Riesz multipliers.
\begin{proposition}
Let $X = \{x_n\}$ and $Y = \{y_n\}$ be two modular Riesz bases with bounds $C, D$ and $C', D'$, respectively. Then
$$K\sqrt {CC'}\leq \|\mathbf{M}_{U,Y,X}\|\leq \sqrt {DD'}\|U\|,$$
where $K := \sup \{\|U(e_n)\|; \{e_n\} \text{is the ONB for}~ \ell^2(A)\}$.
\end{proposition}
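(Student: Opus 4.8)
The plan is to split the two-sided estimate into its easy upper half and its more delicate lower half, exploiting throughout that a modular Riesz basis is generated by an invertible adjointable operator.

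For the upper bound I would simply factor $\mathbf{M}_{U,Y,X}=T^{*}_{Y}UT_{X}$ and apply submultiplicativity of the operator norm. Since $X$ and $Y$ are in particular standard frames with upper bounds $D$ and $D'$, their analysis operators satisfy $\|T_{X}\|\le\sqrt{D}$ and $\|T_{Y}\|\le\sqrt{D'}$; indeed $\|T_{X}x\|^{2}=\|\sum_{n}\langle x,x_{n}\rangle\langle x_{n},x\rangle\|_{A}\le D\|x\|^{2}$, and $\|T^{*}_{Y}\|=\|T_{Y}\|$. Hence $\|\mathbf{M}_{U,Y,X}\|\le\|T^{*}_{Y}\|\,\|U\|\,\|T_{X}\|\le\sqrt{DD'}\,\|U\|$, which is the right-hand inequality.

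The heart of the argument is the lower bound. Let $V_{X}\colon\ell^{2}(A)\to E$ and $V_{Y}\colon\ell^{2}(A)\to F$ be the invertible adjointable operators from the definition of modular Riesz basis, so that $V_{X}e_{n}=x_{n}$ and $V_{Y}e_{n}=y_{n}$. The first observation I would record is that the analysis operator of a Riesz basis is exactly the adjoint of its generating operator: from $(T_{X}x)_{n}=\langle x,x_{n}\rangle=\langle x,V_{X}e_{n}\rangle=\langle V^{*}_{X}x,e_{n}\rangle$ we get $T_{X}=V^{*}_{X}$, and likewise $T^{*}_{Y}=V_{Y}$. Consequently $\mathbf{M}_{U,Y,X}=V_{Y}UV^{*}_{X}$, while the frame operators are $S_{X}=V_{X}V^{*}_{X}$ and $S_{Y}=V_{Y}V^{*}_{Y}$. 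Next I would extract two pointwise norm estimates from the Riesz bounds: because $CI\le S_{X}$ and $C'I\le S_{Y}$, one has $\|V^{-1}_{X}\|=\|S^{-1}_{X}\|^{1/2}\le C^{-1/2}$, and, via $\|z\|=\|V^{-1}_{Y}V_{Y}z\|\le\|V^{-1}_{Y}\|\,\|V_{Y}z\|$, the lower bound $\|V_{Y}z\|\ge\sqrt{C'}\,\|z\|$ for all $z\in\ell^{2}(A)$. Fixing $n$ and testing the multiplier on the nonzero vector $w_{n}:=(V^{*}_{X})^{-1}e_{n}$ gives $\mathbf{M}_{U,Y,X}w_{n}=V_{Y}Ue_{n}$ with $\|w_{n}\|\le\|V^{-1}_{X}\|\le C^{-1/2}$, so that $\|\mathbf{M}_{U,Y,X}\|\ge\|\mathbf{M}_{U,Y,X}w_{n}\|/\|w_{n}\|\ge\sqrt{C}\,\|V_{Y}Ue_{n}\|\ge\sqrt{CC'}\,\|Ue_{n}\|$. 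Taking the supremum over $n$ yields $\|\mathbf{M}_{U,Y,X}\|\ge K\sqrt{CC'}$.

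The main obstacle I anticipate is not the computation but the supporting operator theory in the module setting: justifying that the lower Riesz bound $C'$ really gives the injective-with-bound estimate $\|V_{Y}z\|\ge\sqrt{C'}\|z\|$, rather than merely a statement about $V_{Y}V^{*}_{Y}$. I would handle this through the inverse-norm identity $\|V^{-1}_{Y}\|^{2}=\|(V_{Y}V^{*}_{Y})^{-1}\|=\|S^{-1}_{Y}\|\le 1/C'$ instead of any spectral or singular-value heuristic, together with carefully confirming the identification $T_{X}=V^{*}_{X}$ and that the constants $C,D,C',D'$ are precisely the frame bounds controlling $S_{X}$ and $S_{Y}$.
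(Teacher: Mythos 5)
Your proof is correct and takes essentially the same route as the paper: the upper bound via submultiplicativity, $\|\mathbf{M}_{U,Y,X}\|\le\|T^{*}_{Y}\|\,\|U\|\,\|T_{X}\|\le\sqrt{DD'}\,\|U\|$, and the lower bound by evaluating the multiplier at $w_{n}=(V^{*}_{X})^{-1}e_{n}$, which is precisely the canonical dual element $\tilde{x}_{n}=S^{-1}_{X}x_{n}$ the paper tests on (so that $\mathbf{M}_{U,Y,X}\tilde{x}_{n}=T^{*}_{Y}U(e_{n})$ in both arguments). The only difference is presentational: you derive the needed estimates $\|\tilde{x}_{n}\|\le C^{-1/2}$ and $\|T^{*}_{Y}a\|\ge\sqrt{C'}\|a\|$ explicitly from the generating invertible operators $V_{X},V_{Y}$, whereas the paper obtains the same facts implicitly through the biorthogonality of a modular Riesz basis with its canonical dual.
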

\begin{proof}
The upper inequality follows from Proposition \ref{properties of multi}. Now, for the lower inequality, by chossing the arbitrary
index $n_0$, we have
$$\mathbf{M}_{U,Y,X}(\tilde{x}_{n_0}) = T^{*}_{Y}U(e_{n_0}).$$
So
$$\|\mathbf{M}_{U,Y,X}\|\geq\dfrac{\|\mathbf{M}_{U,Y,X}(\tilde{x}_{n_0})\|}{\|\tilde{x}_{n_0}\|}
 = \dfrac{\|T^{*}_{Y}U(e_{n_0})\|}{\|\tilde{x}_{n_0}\|}\geq K\sqrt{CC'},$$
 and since $n_0$ is chosen arbitrary, the proof is complete.
\end{proof}
The next two propositions give some necessary and sufficient conditions for invertibility of generalized multipliers
associated to modular Riesz bases.
\begin{proposition}
Let $U$ be an bounded linear operator on $\ell^2(A)$ and $X=\{x_n\}$ and $Y=\{y_n\}$ be two modular Riesz bases for Hilbert $A$-module $E$. Then $U$ is invertible if and only if the generalized Riesz
multiplier $\mathbf{M}_{U,Y,X}$ is invertible.
\end{proposition}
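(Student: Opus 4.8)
The plan is to exploit the special structure of the analysis and synthesis operators attached to a modular Riesz basis, thereby reducing the statement to the elementary fact that a product of adjointable operators with invertible outer factors is invertible precisely when the inner factor is.

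First I would observe that, by the definition of a modular Riesz basis, there exist adjointable invertible operators $V_X, V_Y : \ell^2(A) \to E$ with $V_X(e_n) = x_n$ and $V_Y(e_n) = y_n$, where $\{e_n\}$ is the standard orthonormal basis of $\ell^2(A)$. Since an adjointable operator is bounded and $A$-linear and each $\{a_n\} \in \ell^2(A)$ equals $\sum_n a_n e_n$, I can pull $V_X$ through the series to get $V_X(\{a_n\}) = \sum_n a_n x_n = T^{*}_{X}(\{a_n\})$; that is, the synthesis operator $T^{*}_{X}$ coincides with $V_X$ and is therefore invertible. Taking adjoints, the analysis operator $T_X = (T^{*}_{X})^{*} = V_X^{*}$ is invertible as well. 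The same reasoning applies to $Y$, giving invertibility of $T^{*}_{Y} = V_Y$ and $T_Y = V_Y^{*}$. (Modular Riesz bases are standard frames, so all of these operators are genuinely adjointable and the frame-theoretic formulas of Section~2 apply.)

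With this in hand, I would write $\mathbf{M}_{U,Y,X} = T^{*}_{Y}\, U\, T_X = V_Y\, U\, V_X^{*}$, a composition in which both outer factors $V_Y$ and $V_X^{*}$ are invertible. For the forward implication, if $U$ is invertible then $\mathbf{M}_{U,Y,X}$ is a product of three invertible adjointable operators, hence invertible. For the converse, if $\mathbf{M}_{U,Y,X}$ is invertible then solving for $U$ yields $U = V_Y^{-1}\, \mathbf{M}_{U,Y,X}\, (V_X^{*})^{-1}$, again a product of invertibles, so $U$ is invertible.

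I do not expect a genuine obstacle here; the only point requiring care is the identification $T^{*}_{X} = V_X$ (and its analogue for $Y$), since it is this step that transfers the invertibility hypothesis from the abstract operator in the definition to the concrete synthesis operator appearing in $\mathbf{M}_{U,Y,X}$. Once that identification is established, both directions follow immediately from the invertibility of the outer factors, and neither a perturbation argument (Lemma~\ref{perturbation lemma}) nor any frame-bound estimate is needed.
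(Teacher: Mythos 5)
Your proof is correct, and it takes a genuinely different route from the paper's. The paper works with the canonical dual modular Riesz bases $\tilde{X}$, $\tilde{Y}$ and the biorthogonality relations from \cite{Khosravi-Mirzaee Azandaryani}: in the forward direction it exhibits $\mathbf{M}_{U^{-1},\tilde{X},\tilde{Y}}$ as an explicit two-sided inverse of $\mathbf{M}_{U,Y,X}$, and in the converse it verifies that $T_{X}\mathbf{M}^{-1}_{U,Y,X}T^{*}_{Y}$ inverts $U$. Your argument instead identifies the synthesis operators with the defining invertible operators of the Riesz bases, $T^{*}_{X}=V_X$ and $T^{*}_{Y}=V_Y$, so that $\mathbf{M}_{U,Y,X}=V_Y\,U\,V_X^{*}$ and both implications collapse to the triviality that a three-fold product with invertible outer factors is invertible iff the middle factor is. What the paper's approach buys is an explicit description of the inverse multiplier \emph{as another multiplier} (with symbol $U^{-1}$ and the dual bases), which fits the paper's stated motivation of obtaining reconstruction formulas; what your approach buys is economy and rigor: you need no dual frames at all, and you avoid a genuine blemish in the paper's converse, which substitutes $\mathbf{M}^{-1}_{U,Y,X}=\mathbf{M}_{U^{-1},\tilde{X},\tilde{Y}}$ \emph{before} knowing that $U^{-1}$ exists — a circularity that your factorization $U=V_Y^{-1}\,\mathbf{M}_{U,Y,X}\,(V_X^{*})^{-1}$ sidesteps entirely. (In fact the two inverses agree, since $T_X^{-1}=T^{*}_{\tilde{X}}$ and $(T^{*}_{Y})^{-1}=T_{\tilde{Y}}$ for Riesz bases, so your proof recovers the paper's formula as a corollary rather than as an input.) The one step you rightly flag as needing care — pulling $V_X$ through the series $\{a_n\}=\sum_n a_n e_n$ — is justified exactly as you say, by norm convergence of that series in $\ell^2(A)$ together with boundedness and $A$-linearity of the adjointable operator $V_X$.
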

\begin{proof}
Let $\tilde{X}$ and $\tilde{Y}$ be the dual modular Riesz bases of $X$ and $Y$, respectively. If $U$ be invertible, then
\begin{equation*}
(\mathbf{M}_{U,Y,X})(\mathbf{M}_{U^{-1},\tilde{X},\tilde{Y}})
 = (T^{*}_{Y}UT_{X})(T^{*}_{\tilde{X}}U^{-1}T_{\tilde{Y}})
 = Id,
\end{equation*}
and similarly $(\mathbf{M}_{U^{-1},\tilde{X},\tilde{Y}})(\mathbf{M}_{U,Y,X}) = Id$.\\
Conversely, Let $\mathbf{M}_{U,Y,X}$ is an invertible operator. Then
\begin{equation*}
\begin{split}
U(T_{X}\mathbf{M}^{-1}_{U,Y,X}T^{*}_{Y})
 &= U\Big(T_{X}(\mathbf{M}_{U^{-1},\tilde{X},\tilde{Y}})T^{*}_{Y}\Big)\\
 &= U\Big(T_{X}(T^{*}_{\tilde{X}}U^{-1}T_{\tilde{Y}})T^{*}_{Y}\Big)\\
 &= Id,\\
\end{split}
\end{equation*}
also $(T_{X}\mathbf{M}^{-1}_{U,Y,X}T^{*}_{Y})U = Id$. So $U$ is invertible.
\end{proof}
\begin{proposition}
Let $U$ be a bounded invertible operator on $\ell^2(A)$ and $Y = \{y_n\}\subset E$ be a modular Riesz basis. Moreover let $X = \{x_n\}$ be a
standard frame for $E$. Then the following assertions are equivalent.
\begin{enumerate}
  \item $X$ has a unique dual frame.
  \item $\mathbf{M}_{U,Y,X}$ is an invertible.
\end{enumerate}
\end{proposition}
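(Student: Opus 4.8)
The plan is to factor $\mathbf{M}_{U,Y,X}=T^{*}_{Y}\,U\,T_{X}$ and transfer the invertibility question entirely onto the analysis operator $T_{X}\colon E\to\ell^{2}(A)$. First I would record that, since $Y$ is a modular Riesz basis, its synthesis operator is invertible: if $V\colon\ell^{2}(A)\to E$ is the invertible adjointable operator with $V e_{n}=y_{n}$, then for every $\{c_{n}\}\in\ell^{2}(A)$ one has $T^{*}_{Y}(\{c_{n}\})=\sum_{n}c_{n}y_{n}=V(\{c_{n}\})$, so $T^{*}_{Y}=V$ is invertible. As $U$ is invertible by hypothesis, the composition $B:=T^{*}_{Y}U\colon\ell^{2}(A)\to E$ is an invertible adjointable operator. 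Writing $\mathbf{M}_{U,Y,X}=B\,T_{X}$ and using that $B$ is a bijection, I conclude that $\mathbf{M}_{U,Y,X}$ is invertible on $E$ if and only if $T_{X}$ is invertible as an operator from $E$ onto $\ell^{2}(A)$.

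Next I would reduce the invertibility of $T_{X}$ to its surjectivity. Because $X$ is a standard frame, the lower frame bound yields $\sqrt{C}\,\|x\|\le\|T_{X}x\|$ for all $x\in E$, so $T_{X}$ is bounded below; hence it is injective and has closed, orthogonally complemented range. Consequently $T_{X}$ is invertible precisely when it is surjective. Moreover, surjectivity of $T_{X}$ is exactly the condition that realizes $X$ as a modular Riesz basis: if $T_{X}$ is invertible then so is its adjoint $T^{*}_{X}$, and since $T^{*}_{X}e_{n}=x_{n}$, the operator $T^{*}_{X}$ exhibits $X$ as a modular Riesz basis; the converse is the computation already used for $Y$. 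Thus statement (2) is equivalent to the surjectivity of $T_{X}$.

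It therefore remains to identify statement (1) with the same condition, namely to prove that a standard frame $X$ has a unique dual frame if and only if $T_{X}$ is surjective. If $T_{X}$ is surjective (hence invertible), any dual $\{g_{n}\}$ has synthesis operator satisfying $T^{*}_{g}T_{X}=\mathrm{Id}_{E}$, which forces $T^{*}_{g}=T_{X}^{-1}$, so $g_{n}=T_{X}^{-1}e_{n}$ is uniquely determined and the dual is unique. For the contrapositive, if $T_{X}$ is not surjective then $N:=\mathrm{Ran}(T_{X})^{\perp}$ is a nonzero complemented submodule of $\ell^{2}(A)$; choosing $0\neq u\in N$ and a suitable $w\in E$ and setting $h_{n}:=\langle e_{n},u\rangle w$, one checks that $\{h_{n}\}$ is a nonzero Bessel sequence with $\sum_{n}\langle x,x_{n}\rangle h_{n}=0$ for all $x\in E$, so that $\{\tilde{x}_{n}+h_{n}\}$ is a dual frame distinct from the canonical dual $\{\tilde{x}_{n}\}$. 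This produces two distinct duals, contradicting uniqueness. Combining the two reductions, statements (1) and (2) are both equivalent to the surjectivity of $T_{X}$, hence to each other.

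I expect the main obstacle to be this last paragraph: in the module setting one must justify that $\mathrm{Ran}(T_{X})$ is orthogonally complemented (using that $T_{X}$ is adjointable with closed range) and verify that the perturbation $\{h_{n}\}$ is genuinely nonzero and Bessel, since $C^{*}$-algebras admit zero divisors and no dimension count is available as over $\mathbb{C}$. The factorization and the reduction to surjectivity are routine once $T^{*}_{Y}=V$ is identified.
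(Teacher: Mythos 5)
Your first two reductions coincide with the paper's own proof: you factor $\mathbf{M}_{U,Y,X}=(T^{*}_{Y}U)T_{X}$, observe that $T^{*}_{Y}U$ is invertible because $Y$ is a modular Riesz basis and $U$ is invertible, and conclude that statement (2) is equivalent to surjectivity of $T_{X}$ (the paper gets injectivity of $T_{X}$ from the reconstruction formula, you from the lower frame bound; both work). The divergence is at the last step: the paper disposes of the equivalence between (1) and surjectivity of $T_{X}$ by citing \cite[Theorem 4.9]{Jing-2006}, whereas you attempt to prove it from scratch. Your direction ``$T_{X}$ surjective $\Rightarrow$ unique dual'' is correct. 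The genuine gap is exactly the point you flag and leave open in the contrapositive of the converse: you need $0\neq u\in N=\mathrm{Ran}(T_{X})^{\perp}$ and $w\in E$ such that $h_{n}=\langle e_{n},u\rangle w=u_{n}^{*}w$ is nonzero for some $n$, and such a pair need not exist. Indeed, the lemma you set out to prove (``a standard frame has a unique dual frame if and only if its analysis operator is onto'') is false for a general module $E$: take $A=\mathbb{C}\oplus\mathbb{C}$, $p=(1,0)$, $E=\ell^{2}(pA)\subset\ell^{2}(A)$ and $x_{n}=pe_{n}$. Then $X=\{x_{n}\}$ is a standard tight frame for $E$ whose unique dual is $X$ itself, yet $\mathrm{Ran}(T_{X})=\ell^{2}(pA)$ is a proper complemented submodule of $\ell^{2}(A)$; every $u\in N$ has all its components in $(1-p)A$, which annihilates $E$, so every candidate perturbation $h_{n}=u_{n}^{*}w$ vanishes identically, and in fact no second dual exists.

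What rescues the proposition is the hypothesis on $Y$, which your final paragraph never uses. A modular Riesz basis yields an adjointable invertible $V:\ell^{2}(A)\to E$, and this forces $E$ to be a faithful $A$-module: if $aE=0$ then $V(ae_{1})=aV(e_{1})=0$, hence $ae_{1}=0$ and $a=0$. (In the counterexample above no modular Riesz basis exists, so the proposition itself is not contradicted.) Faithfulness closes your gap: for $0\neq u\in N$ one has $\langle u,u\rangle\neq 0$, so there is $w\in E$ with $\langle u,u\rangle w\neq 0$, and since $\langle u,u\rangle w=\sum_{n}u_{n}(u_{n}^{*}w)$, some $h_{n}=u_{n}^{*}w$ is nonzero; then $\{\tilde{x}_{n}+h_{n}\}$ is a dual frame distinct from the canonical one (the remaining check that it is a standard frame is routine, using the reconstruction identity). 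So your route can be completed, and it has the merit of exposing exactly where the modular-Riesz-basis hypothesis enters the implication from (2) to (1) --- something the paper's appeal to \cite[Theorem 4.9]{Jing-2006} conceals --- but as written the argument has a real hole at its decisive step.
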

\begin{proof}
(1)$\Rightarrow$ (2) To obtain the second statement from the first one, suppose that $X$ has a unique dual frame.
Then by \cite[Theorem 4.9]{Jing-2006}, the associated analysis operator $T_{X}$ is
surjective. Also by using the reconstruction formula \eqref{recons formula}, we conclude that $T_{X}$ is injective and so $T_{X}$ is bijective. Due to the fact
that $T^{*}_{Y}$ and $U$ are bijective, we deduce $\mathbf{M}_{U,Y,X}$ is invertible.\\
(2)$\Rightarrow$ (1) Now, to drive the first statement from the second one, we assume $\mathbf{M}_{U,Y,X}$ is invertible. Then $T_{X}$ is surjective and so by  \cite[Theorem 4.9]{Jing-2006},
$X$ has a unique dual frame.
\end{proof}

%

\bibliographystyle{amsplain}

\begin{thebibliography}{5}

\bibitem{abbaspour}
Gh. Abbaspour Tabadkan, H. Hosseinnezhad and A. Rahimi,
 \textit{Generalized Bessel multipliers in Hilbert spaces},
Submitted.

\bibitem{ARAMBASIC-2007}
L. Aramba{\v{s}}i{\'c},
 \textit{On frames for countably generated Hilbert $C^{*}$-modules},
Proceedings of the American Mathematical Society, \textbf{135} (2007), no. 2, 469--478.

\bibitem{Aupetit-1991}
B. Aupetit,
{A Primer on Spectral Theory },
Springer-Verlag, (1991).

\bibitem{Balazs-2007}
P. Balazs,
 \textit{Basic definition and properties of Bessel multipliers},
J. Math. Anal. Appl. \textbf{325} (2007), no. 2, 581--588.

\bibitem{Balazs-2008}
P. Balazs,
 \textit{Matrix Representation of Operators Using Frames},
Sampling Theory in Signal and Image Processing (STSIP), \textbf{7} (2008), no. 1, 39--54.

\bibitem{Balazs-2009}
P. Balazs,
 \textit{Matrix Representation of Bounded Linear Operators By Bessel Sequences, Frames and Riesz Sequence},
SAMPTA'09, Marseille, (2009), 18--22.

\bibitem{Cazassa-2007}
P.G. Cazassa and O. Christensen,
 \textit{Perturbation of operators and applications to frame theory},
Journal of Fourier Analysis and Applications, \textbf{3} (1997), no. 5, 543--557.

\bibitem{Christensen-2003}
O. Christensen,
{An Introduction to Frames and Riesz Bases},
Birkhäuser, (2003).

\bibitem{J.B. Conway}
J.B. Conway,
{A Course in Functional Analysis},
New York. Graduate Texts in Mathematics, \textbf{96}. Springer-Verlag, (1990).

\bibitem{Meyer-1986}
I. Daubechies, A. Grossmann and Y. Meyer,
 \textit{ainless nonorthogonal expansions},
J. Math. Phys. \textbf{27} (1986), 1271--1283.

\bibitem{R.J. Duffin and A.C. Schaeffer-1952}
R.J. Duffin and A.C. Schaeffer,
 \textit{A class of nonharmonic Fourier series},
Trans. Amer. Math. Soc. \textbf{72} (1952), 341--366.

\bibitem{Frank-Larson-2002}
M. Frank and D. Larson,
 \textit{Frames in Hilbert $C^{*}$–modules and C *–algebras},
J. Operator Theory, \textbf{48} (2002), 273--314.

\bibitem{Grochenig-2006}
K. Gr{\"o}chenig,
 \textit{Time-frequency analysis of Sj{\"o}strand's class},
Rev. Mat. Iberoam.,
\textbf{22} (2006), 703--724.

\bibitem{Han-Jing-Larson-Mohapatra}
D. Han, W. Jing, D. Larson, R. Mohapatra,
 \textit{Riesz bases and their dual modular frames in Hilbert $C^{*}$–modules},
J. Math. Anal. Appl, \textbf{343} (2008), 246--256.

\bibitem{Han-Jing-Mohapatra}
D. Han, W. Jing, R. Mohapatra,
 \textit{Perturbation of frames and Riesz bases in Hilbert $C^{*}$–modules},
Linear Algebra and its Applications, \textbf{431} (2009), 746--759.

\bibitem{Jing-2006}
W. Jing,
{Frames in Hilbert $C^{*}$-modules},
Ph.D. Thesis, University of Central Florida, (2006).

\bibitem{A.Khosravi-B.Khosravi}
 A. Khosravi and B. Khosravi,
 \textit{G-frames and modular Riesz bases},
Int. J. Wavelets Multiresolut. Inf. Process, \textbf{10}, no. 2, (2012), 1250013, 12 pp.

\bibitem{Khosravi-Mirzaee Azandaryani}
 A. Khosravi, M. Mirzaee Azandaryani,
 \textit{Bessel multipliers in Hilbert $C^{*}$–modules},
Banach Journal of Mathematical Analysis, \textbf{9} (2015), 153--163.

\bibitem{Lance-1995}
E.C. Lance,
{Hilbert $C^{*}$–modules: a Toolkit for Operator Algebraists},
Cambridge University Press, Cambridge, (1995).

\bibitem{Manuilov-Troitsky}
V. M. Manuilov and E. V. Troitsky,
{Hilbert $C^{*}$–modules},
Amer. Math. Soc., (2005).


\bibitem{Stoeva-2012}
D. Stoeva, P. Balazs,
 \textit{ Invertibility of multipliers},
Applied and Computational Harmonic Analysis. \textbf{33} (2012), 292--299.

\bibitem{Strohmer-2006}
T. Strohmer,
 \textit{Pseudodifferential operators and Banach algebras in mobile communications},
Applied and Computational Harmonic Analysis. \textbf{20} (2006), 237--249.

\end{thebibliography}

\end{document}